\title{Separation ratios of maps between Banach spaces}
\author {Christian Rosendal}
\address{Department of Mathematics\\University of Maryland\\4176 Campus Drive - William E. Kirwan Hall\\College Park, MD 20742-4015\\USA}
\email{rosendal@umd.edu}
\urladdr{sites.google.com/view/christian-rosendal/}
\newcommand{\norm}[1]{\lVert#1\rVert}
\newcommand{\Norm}[1]{\big\lVert#1\big\rVert}
\newcommand{\NORM}[1]{\Big\lVert#1\Big\rVert}
\newcommand{\triple}[1]{|\!|\!|#1|\!|\!|}
\newcommand{\TRIPLE}[1]{\Big|\!\Big|\!\Big|#1\Big|\!\Big|\!\Big|}
\newcommand{\forkindep}[1][]{\mathop{\mathop{\vcenter{\hbox{\oalign{\noalign{\kern-.3ex}
\hfil$\vert$\hfil\cr\noalign{\kern-.7ex}$\smile$\cr\noalign{\kern-.3ex}}}}}\displaylimits_{#1}}}
\newcommand{\maths}[1]{\[\begin{split}{#1}\end{split}\]}
\newcommand{\maps}[1]{\mathop{\overset{#1}\longrightarrow}}
\newcommand {\N}{\mathbb N}
\newcommand {\M}{\mathbb M}
\newcommand{\om}{\omega}
\newcommand{\eps}{\epsilon}
\newcommand{\iso}{\cong}
\newcommand{\sym}{\vartriangle}
\newcommand{\saa}{\Rightarrow}
\newcommand{\equi}{\Leftrightarrow}
\newcommand {\Del}{ \; \Big| \;}
\newcommand {\del}{ \; \big| \;}
\newcommand {\ku} {\mathcal}
\newcommand{\ov}{\overline}
\newcommand{\inv}{^{-1}}
\theoremstyle{plain}
\newtheorem{thm}{Theorem}[]
\newtheorem{lemme}[thm]{Lemma}
\newtheorem{prop} [thm] {Proposition}
\newtheorem{defi} [thm] {Definition}
\theoremstyle{definition}
\newtheorem{exa}[thm]{Example}
\newtheorem{probl}[thm]{Problem}
\definecolor{groen}{rgb}{0,0.5,.7}
\definecolor{gul}{rgb}{0.94,0.8,0}
\definecolor{blaa}{rgb}{0.16,0,0.6}
\definecolor{roed}{rgb}{1,0,0}
\begin{document}
\subjclass[2010]{46B80}

\keywords{Coarse embeddings, Banach spaces}
\thanks{The research for this article was supported by the NSF through award DMS 2204849.}

\begin{abstract}
Under the weak assumption on a Banach space $E$ that $E\oplus E$ embeds isomorphically into $E$, we provide a characterisation of when a  Banach space $X$ coarsely embeds into $E$ via a single numerical invariant.
\end{abstract}

\maketitle



\section{Introduction}
The concept of coarse embeddability between metric spaces can be viewed as a large scale analogue of uniform embeddability and may most easily be understood in terms of the moduli associated with a map. However, as we are exclusively concerned with Banach spaces, these moduli can further be reduced to a couple of  numerical invariants.
\begin{defi}
For a (generally  discontinuous and nonlinear) map $X\maps \phi E$ between two Banach spaces we define the {\em exact compression coefficient} $\ov \kappa(\phi)$, the {\em compression coefficient} $\kappa(\phi)$ and the {\em expansion coefficient} $\om(\phi)$ by
$$
\ov \kappa(\phi)=\sup_{t<\infty}\;\;\inf_{\norm{x-y}= t}\;\;\Norm{\phi(x)-\phi(y)},
$$
$$
\kappa(\phi)=\sup_{t<\infty}\;\;\inf_{\norm{x-y}\geqslant t}\;\;\Norm{\phi(x)-\phi(y)},
$$
and 
$$
\om(\phi)=\inf_{t>0}\;\;\sup_{\norm{x-y}\leqslant t}\;\;\Norm{\phi(x)-\phi(y)}.
$$
\end{defi}

To avoid certain trivialities, we shall tacitly assume that all Banach spaces have dimension at least $2$ and hence, in particular, that the infima and suprema above are taken over non-empty sets. 
Let us first note the obvious fact that $\om(\phi)=0$ if and only if $\phi$ is uniformly continuous. On the other hand, 
$\om(\phi)<\infty$ if and only if $\phi$ is {\em Lipschitz for large distances}, that is,
$$
\Norm{\phi(x)-\phi(y)}\leqslant K\norm{x-y}+K
$$
for some constant $K$ and all $x,y\in X$. Similarly, assumptions on $\kappa(\phi)$ correspond to known conditions on the map $\phi$. We summarise these as follows.
\begin{enumerate}
\item $\om(\phi)=0$, that is, $\phi$ is uniformly continuous,
\item $\om(\phi)<\infty$, that is, $\phi$ is Lipschitz for large distances,
\item $\kappa(\phi)=\infty$, that is, $\phi$ is {\em expanding},
\item $\kappa(\phi)>0$, that is, $\phi$ is {\em uncollapsed}.
\end{enumerate}
Note that the three coefficients above are all positive homogenous, in the sense that
$$
\kappa(\lambda\phi)=\lambda\cdot\kappa(\phi) 
$$
for all $\lambda>0$ and similarly for $\ov \kappa(\phi)$ and $\om(\phi)$. In particular, this means that the following quantity is invariant under rescaling $\phi$.

\begin{defi}\label{sep rat}
The {\em separation ratio} of  a map $X\maps \phi E$ is the quantity
$$
\ku R(\phi)=\frac{\kappa(\phi)}{\om(\phi)},
$$
where we set $\frac a\infty=\frac 0a=0$ for all $a\in [0,\infty]$ and  $\frac a0=\frac \infty a=\frac \infty 0=\infty$ for all $0<a<\infty$.
\end{defi}
Whereas $\phi$ being a uniform embedding cannot be directly expressed via the coefficients above, we note that $\phi$ is  a  {\em coarse embedding} provided that $\om(\phi)<\infty$ and  $\kappa(\phi)=\infty$, that is, if $\phi$ is Lipschitz for large distances and is expanding. 
We thus see that
$$
\ku R(\phi)=\infty
$$
if and only if $\phi$ is either uniformly continuous and uncollapsed (e.g., a uniform embedding) or if $\phi$ is a coarse embedding.

Motivated in part by the still open problem of deciding whether a Banach space $X$ coarsely embeds into a Banach space $E$ if and only it uniformly embeds, the papers \cite{Braga1, Braga2, naor-nets, equivariant, almost transitive, geomgroups} contain various constructions for producing uniform and coarse embeddings or obstructions to the same. In particular, in \cite{geomgroups} (see Theorem 1.16) we showed that, provided that $E\oplus E$ isomorphically embeds into $E$, then a uniformly continuous and uncollapsed map $X\maps\phi E$ gives rise to a simultaneously uniform and coarse embedding of $X$ into $E$. However, as shown by A. Naor \cite{naor-nets}, there are Lipschitz for large distance maps that are not even close to any uniformly continuous map. For the exclusive purpose of coarse embeddability, our main result,  Theorem \ref{main}, removes the problematic assumption of uniform continuity of $\phi$.

\begin{thm}\label{main}
Suppose $X$ and $E$ are Banach spaces so that $E\oplus E$ isomorphically embeds into $E$. Then $X$ coarsely embeds into $E$ if and only if 
$$
\sup_{\phi}\;\ku R(\phi)=\infty,
\vspace{-.15cm}
$$
where the supremum is taken over all maps $X\maps \phi E$.
\end{thm}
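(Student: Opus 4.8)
One direction is easy: if $X$ coarsely embeds into $E$ via $\Phi$, then $\om(\Phi)<\infty$ and $\kappa(\Phi)=\infty$, so $\ku R(\Phi)=\infty$ and the supremum is infinite. For the converse, suppose $\sup_\phi\ku R(\phi)=\infty$; the plan is to build a coarse embedding $X\to E$. If the supremum is attained, say $\ku R(\phi)=\infty$ for some $\phi$, then by the discussion after Definition~\ref{sep rat} this $\phi$ is either already a coarse embedding, and we are done, or it is uniformly continuous and uncollapsed, in which case \cite[Theorem~1.16]{geomgroups} promotes it to a coarse embedding. So we may assume the supremum is not attained and pick, for each $j\geqslant 1$, a map $\phi^{(j)}$ with $j\leqslant\ku R(\phi^{(j)})<\infty$. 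Finiteness of the ratio together with $\ku R(\phi^{(j)})>0$ forces $0<\om(\phi^{(j)}),\kappa(\phi^{(j)})<\infty$, so after rescaling in the range we may assume $\om(\phi^{(j)})=1$ and hence $\kappa(\phi^{(j)})=\ku R(\phi^{(j)})=:R_j\to\infty$.

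Next I would normalise the $\phi^{(j)}$ with constants not depending on $j$. Since $\om(\phi^{(j)})=1$, there is $t_j>0$ with $\sup_{\norm{x-y}\leqslant t_j}\Norm{\phi^{(j)}(x)-\phi^{(j)}(y)}\leqslant 2$; replacing $\phi^{(j)}$ by $x\mapsto\phi^{(j)}(t_jx)$, which changes neither $\kappa$ nor $\om$, and chaining along a segment cut into steps of length $\leqslant 1$ gives $\Norm{\phi^{(j)}(x)-\phi^{(j)}(y)}\leqslant 2\norm{x-y}+2$ for all $x,y$. Subtracting $\phi^{(j)}(0)$ we may also take $\phi^{(j)}(0)=0$, so $\Norm{\phi^{(j)}(x)}\leqslant 2\norm x+2$. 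Finally, as $\kappa(\phi^{(j)})=R_j$, there is $u_j\geqslant 1$ with $\Norm{\phi^{(j)}(x)-\phi^{(j)}(y)}\geqslant R_j/2$ whenever $\norm{x-y}\geqslant u_j$.

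Now the amalgamation, which is where $E\oplus E\hookrightarrow E$ enters a second time. Equip $E\oplus E$ with the norm $\norm{(v,w)}=\max(\norm v,\norm w)$ and fix a linear embedding $T\colon E\oplus E\to E$; after rescaling we may take $\norm{(v,w)}/C\leqslant\Norm{T(v,w)}\leqslant\norm{(v,w)}$ for some $C\geqslant 1$. Passing to a subsequence $(n_k)$ with $R_{n_k}\geqslant(2C)^{3k}$, put $\psi_k=2^{-k}\phi^{(n_k)}$ and set
$$
\Phi(x)=\lim_{m\to\infty}T\big(\psi_0(x),\,T(\psi_1(x),\,\ldots\,T(\psi_m(x),0)\cdots)\big).
$$
The limit exists in $E$ because successive terms differ in norm by at most $\norm{\psi_{m+1}(x)}\leqslant 2^{-(m+1)}(2\norm x+2)$. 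Unwinding the nesting with $\Norm{T(v,w)}\leqslant\max(\norm v,\norm w)$ gives $\Norm{\Phi(x)-\Phi(y)}\leqslant\sup_k\Norm{\psi_k(x)-\psi_k(y)}\leqslant 2\norm{x-y}+2$, and restricting to $\norm{x-y}\leqslant 1$ bounds this supremum by $2$, so $\om(\Phi)\leqslant 2$ and $\Phi$ is Lipschitz for large distances. Unwinding instead with $\Norm{T(v,w)}\geqslant\norm w/C$ down to the $k$-th slot gives $\Norm{\Phi(x)-\Phi(y)}\geqslant C^{-(k+1)}\Norm{\psi_k(x)-\psi_k(y)}$ for every $k$; hence for $\norm{x-y}\geqslant u_{n_k}$ we get $\Norm{\Phi(x)-\Phi(y)}\geqslant C^{-(k+1)}2^{-k}R_{n_k}/2\geqslant(2C)^{2k}/(2C)\to\infty$, so $\kappa(\Phi)=\infty$. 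Thus $\Phi$ is a coarse embedding of $X$ into $E$.

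The content of the argument is the decision to amalgamate the normalised maps through nested copies of $E$ sitting inside $E$: passing instead to an $\ell_1$- or $c_0$-sum of copies of $E$ produces a coarse embedding almost for free but leaves one stuck, since such a sum need not coarsely embed back into $E$. The exponential distortion incurred by iterating $E\oplus E\hookrightarrow E$ is absorbed precisely by thinning the sequence $(\phi^{(j)})$ until the separation constants $R_{n_k}$ overtake it, while the weights $2^{-k}$ simultaneously force convergence of the nested expression and keep the large-scale Lipschitz behaviour under control. I expect the only real care to be in checking that the nested map is well defined and in lining up the constants in the two unwinding estimates; beyond spotting this construction there is no conceptual obstacle.
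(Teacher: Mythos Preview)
Your proof is correct and follows essentially the same route as the paper: iterate the hypothesis $E\oplus E\hookrightarrow E$ to create infinitely many ``slots'' inside $E$, drop suitably rescaled maps $\phi^{(j)}$ with diverging separation ratio into these slots, and verify that the amalgam is a coarse embedding. The paper packages the iteration as a sequence of complemented subspaces $E_n\subseteq E$ with projections $P_n$, writes the amalgam as a sum $\psi(x)=\sum_n\psi_n(x)$, and uses $\norm{P_n}$ for the lower bound; you instead keep the isomorphism $T$ explicit, write the amalgam as a nested limit, and use the iterated constant $C^{-(k+1)}$ for the lower bound. These are the same construction in different clothing. One small streamlining in your version is the preliminary normalisation making every $\phi^{(j)}$ satisfy the \emph{uniform} bound $\norm{\phi^{(j)}(x)-\phi^{(j)}(y)}\leqslant 2\norm{x-y}+2$ via a domain rescaling; this lets you read off the large-distance Lipschitz bound for $\Phi$ directly as $\sup_k\norm{\psi_k(x)-\psi_k(y)}$, whereas the paper does a separate chaining step inside the amalgam. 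A minor slip: with your indexing starting at $k=0$, the bound for $\norm{x-y}\leqslant 1$ is $\sup_k 2^{-k}(2\cdot1+2)=4$, not $2$, so $\om(\Phi)\leqslant 4$; this of course does not affect the conclusion.
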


The proof of Theorem \ref{main} also allows us to address another issue, namely, the preservation of cotype under different forms of embeddability. For this, consider the following conditions on a map $X\maps\phi E$.
\begin{enumerate}
\item[(5)] $\ov\kappa(\phi)=\infty$, that is, $\phi$ is {\em almost expanding},
\item[(6)] $\ov\kappa(\phi)>0$, that is, $\phi$ is {\em almost uncollapsed}.
\end{enumerate}
Also, the map $\phi$ is said to be {\em solvent} provided that there are constants $R_1,R_2,\ldots$ so that
$$
R_n\leqslant \norm{x-y}\leqslant R_n+n \;\saa\; \Norm{\phi(x)-\phi(y)}\geqslant n.
$$
Provided that $\phi$ is Lipschitz for large distances, $\phi$ is solvent if and only if it is almost expanding  (see Lemma 8 \cite{equivariant}). In analogy with Definition \ref{sep rat}, we then define the {\em exact separation ratio} of $\phi$ to be
$$
\ov{\ku R}(\phi)=\frac  {\ov\kappa(\phi)}    {\om(\phi)}.
$$
As $\kappa(\phi)\leqslant \ov \kappa(\phi)$, we then have $\ku R(\phi)\leqslant \ov{\ku R}(\phi)$. Also, $\ov{\ku R}(\phi)=\infty$ exactly when $\phi$ is either uniformly continuous and almost uncollapsed or is Lipschitz for large distances and solvent.

In connection with this, B. Braga \cite{Braga2} strengthened work by M. Mendel and A. Naor \cite{mendel} to show that, if $X$ maps into a Banach space $E$ with non-trivial type by a map that is either uniformly continuous and almost uncollapsed or is Lipschitz for large distances and solvent, then 
$$
{\sf cotype}(X)\leqslant {\sf cotype}(E).
$$
The following statement therefore covers both cases of Braga's result and seemingly provides the ultimate extension in this direction.
\begin{thm}\label{main2}
Suppose $X$ and $E$ are Banach spaces so that
$$
\sup_{\phi} \ov{\ku R}(\phi)  =\infty
$$
and that $E$ has non-trivial type. Then
$$
{\sf cotype}(X)\leqslant {\sf cotype}(E).
$$
\end{thm}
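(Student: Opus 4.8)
The plan is to deduce the cotype inequality by transferring a \emph{metric} characterisation of cotype --- that of Mendel and Naor --- from $E$ to $X$, feeding it with the maps that witness $\sup_\phi\ov{\ku R}(\phi)=\infty$. What makes the self-embedding hypothesis of Theorem \ref{main} unnecessary here is that cotype of a Banach space is a \emph{finitary} invariant: it is decided by inequalities among finitely many vectors, so it can be certified one scale at a time and one never has to amalgamate infinitely many maps inside $E$. The degenerate cases are immediate from Braga's theorem: a witness $\phi$ with $\om(\phi)=0$ is uniformly continuous, and $\ov{\ku R}(\phi)=\infty$ then forces $\ov\kappa(\phi)>0$, so $\phi$ is uniformly continuous and almost uncollapsed; and a witness with $\om(\phi)<\infty$ and $\ov\kappa(\phi)=\infty$ is Lipschitz for large distances and, being almost expanding, solvent. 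So I would assume that for every $k$ there is a map $\phi_k$ with $0<\om(\phi_k)<\infty$, $\ov\kappa(\phi_k)<\infty$ and $\ov\kappa(\phi_k)\geqslant k\,\om(\phi_k)$. Rescaling the target one arranges $\om(\phi_k)=1$ and $\ov\kappa(\phi_k)\geqslant k$; then $\phi_k$ is Lipschitz for large distances, say $\Norm{\phi_k(x)-\phi_k(y)}\leqslant K_k\norm{x-y}+K_k$, and there is a scale $\sigma_k$ with $\inf_{\norm{x-y}=\sigma_k}\Norm{\phi_k(x)-\phi_k(y)}\geqslant k-1$. One also records that rescaling the \emph{domain} of a map changes none of $\kappa,\ov\kappa,\om$, a freedom exploited below.

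Fix $q>{\sf cotype}(E)$, so $E$ has cotype $q$; since $E$ has non-trivial type, the Mendel--Naor metric cotype theorem applies to $E$ in sharp form --- the sole use of non-trivial type --- so there is a constant $C$ such that for every $n$ there is $m$ of order $n^{1/q}$ with
$$
\sum_{j=1}^n\mathbb E_x\Norm{g(x+\tfrac m2 e_j)-g(x)}^q \;\leqslant\; C^q m^q\;\mathbb E_{x,\eps}\Norm{g(x+\eps)-g(x)}^q
$$
for all $g\colon \Z_m^n\to E$, with $x$ uniform in $\Z_m^n$ and $\eps$ uniform in $\{-1,0,1\}^n$. To extract cotype $q$ for $X$ I would, given $z_1,\dots,z_n\in X$, plug into this inequality a map $g=\phi_k\circ(\lambda f)$ with $f(x)=\sum_j x_jz_j$ and $\lambda$ a scaling factor: then $f(x+\tfrac m2 e_j)-f(x)=\tfrac m2 z_j$ and $f(x+\eps)-f(x)=\sum_j\eps_jz_j$, the Lipschitz-for-large-distances bound together with Kahane's inequality controls the right-hand side, and --- after the usual dyadic reduction to $\norm{z_j}\approx r$ and the choice $\lambda\tfrac m2 r=\sigma_k$ --- the left-hand side is $\gtrsim n(k-1)^q$ because each displacement $\lambda\tfrac m2 z_j$ has norm $\approx\sigma_k$. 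Since $m^q\asymp n$ and $m\lambda=2\sigma_k/r$, a rearrangement turns the inequality for $E$ into
$$
\sum_{j=1}^n\norm{z_j}^q\;\lesssim_{q,E}\;\bigl(K_k\sigma_k/k\bigr)^q\;\mathbb E_\eps\Norm{\textstyle\sum_j\eps_jz_j}^q ,
$$
and converting the $\{-1,0,1\}$-average to a Rademacher average by Kahane's inequality yields the cotype-$q$ inequality for $X$ --- \emph{provided} the ``efficiency'' $k/(K_k\sigma_k)$ of the maps $\phi_k$ stays bounded away from zero as $k\to\infty$. Running this over all $n$ gives ${\sf cotype}(X)\leqslant q$, and $q\downarrow{\sf cotype}(E)$ finishes.

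Securing that lower bound on the efficiency is the heart of the matter and the step I expect to be the real obstacle. The Lipschitz bound only gives $K_k\sigma_k\gtrsim k$ --- the efficiency is automatically at most comparable to $1$ --- while mere largeness of $\ov{\ku R}(\phi_k)$ does not prevent a witness from expanding very \emph{inefficiently}, at a scale $\sigma_k$ far beyond what its Lipschitz constant warrants; and the efficiency is scale-invariant, since domain rescaling moves $\sigma_k$ but inflates $K_k$ in exact compensation, so no renormalisation of a single $\phi_k$ can help. This is exactly where Braga's theorem alone does not suffice and the finer analysis developed in the proof of Theorem \ref{main} is brought to bear: suitably adapted, it manufactures out of the $\phi_k$ maps that are genuinely quasi-isometric --- uniformly Lipschitz for large distances and expanding by amounts tending to infinity --- across windows of scales wide enough for the transfer above (which also disposes of the dyadic-reduction bookkeeping, a window map handling a band of displacements at once). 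Because for a fixed grid dimension $n$ only a bounded range of scales enters, this is carried out over finitely many scales at a time, and it is this restriction to bounded ranges, together with the finitary nature of cotype, that lets the self-embedding hypothesis indispensable in Theorem \ref{main} be dropped in Theorem \ref{main2}. With such maps in place of $\phi_k$ the scales line up and the estimates of the previous paragraph close; what remains --- Kahane's inequality, the passage between $\{-1,0,1\}$- and $\{-1,1\}$-valued sums, and the optimisation of $n,m,k,\lambda$ --- is routine.
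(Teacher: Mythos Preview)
Your proposal has a genuine gap, and the paper's proof takes a quite different and much shorter route that sidesteps it entirely.

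The gap is exactly the one you flag yourself: the ``efficiency'' $k/(K_k\sigma_k)$ need not stay bounded away from zero. You appeal to ``the finer analysis developed in the proof of Theorem~\ref{main}'' to manufacture maps with controlled efficiency on windows of scales, but the proof of Theorem~\ref{main} does no such thing. What it does is \emph{amalgamate} infinitely many rescaled maps $\psi_n$ by summing them into complemented summands $E_n\subseteq E$; no single $\psi_n$ is made efficient. The self-embedding hypothesis is what permits that summation, and your finitary heuristic (``only a bounded range of scales for fixed $n$'') is not developed into an actual construction. As it stands, the main transfer inequality you derive,
$\sum_j\norm{z_j}^q\lesssim (K_k\sigma_k/k)^q\,\mathbb E_\eps\norm{\sum_j\eps_jz_j}^q$,
has a right-hand constant that may blow up with $k$, so nothing is proved.

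The paper avoids the efficiency issue altogether by changing the target space. Since ${\sf type}\big(\ell_2(E)\big)={\sf type}(E)$ and ${\sf cotype}\big(\ell_2(E)\big)={\sf cotype}(E)$, it suffices to exhibit a single map $X\maps\psi\ell_2(E)$ that is Lipschitz for large distances and solvent, and then invoke Braga's theorem as a black box. Given $\phi_n$ with $\ov{\ku R}(\phi_n)>n2^n$, one rescales to $\psi_n(x)=\tfrac1{2^n\lambda_n}\phi_n(\tfrac{\Lambda_n}n x)$ so that $\norm{x-y}\leqslant n\Rightarrow\norm{\psi_n(x)-\psi_n(y)}\leqslant 2^{-n}$ and $\norm{x-y}=\tfrac{n\Delta_n}{\Lambda_n}\Rightarrow\norm{\psi_n(x)-\psi_n(y)}\geqslant n$, and sets $\psi(x)=(\psi_1(x),\psi_2(x),\ldots)$. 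The point is that $\ell_2(E)$ always contains an $\ell_2$-sum of copies of $E$, so the amalgamation of Lemma~\ref{unif saa coarse} goes through with no hypothesis on $E$; Mendel--Naor is never invoked directly, only via Braga. Your direct-transfer strategy, even if the efficiency problem could be fixed, is considerably more laborious and gains nothing over this.
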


Problem 7.4 in Braga's paper \cite{Braga2} asks what can be deduced about a space $X$ that admits a map $X\maps \phi E$ that is just Lipschitz for large distances and almost uncollapsed, i.e. so that $\ov{\ku R}(\phi)>0$. That is, will restrictions on the geometry of $E$ also lead to information about the geometry of $X$? In Example \ref{constant=1}, we show that this is not always so. Indeed, if $X$ is separable and $E$ is infinite-dimensional, one can always find a map $X\maps \phi E$ that is both Lipschitz for large distances and uncollapsed, i.e., so that ${\ku R}(\phi)>0$, and after renorming $E$ one can even obtain ${\ku R}(\phi)\geqslant1$. On the other hand, Theorem \ref{main2} provides a positive answer to Braga's question under the alternative assumption $\sup_{\phi} \ov{\ku R}(\phi)  =\infty$.

\vspace{.3cm}

\noindent {\bf Acknowledgements:}
I am very grateful for the extensive feedback and criticisms I got from B. Braga on a first version of this paper and for suggesting a link with cotype that led to Theorem \ref{main2}.


\section{Proofs}\label{sec:proofs}
Before proving our main results, let us introduce four functional moduli that lie behind the definitions of the (exact) compression and expansion coefficients.
\begin{defi}[Compression moduli]
For a (generally  discontinuous and nonlinear) map $X\maps \phi E$ between two Banach spaces we define the {\em exact compression modulus} $\ov \kappa_\phi\colon [0,\infty[\,\to [0,\infty[$ 
$$
\ov\kappa_\phi(t)=\inf\big\{\norm{\phi(x)-\phi(y)}\del \norm{x-y}= t\big\}
$$
and the {\em compression modulus} by $\ov\kappa_\phi\colon [0,\infty[\,\to [0,\infty[$ by
$$
\kappa_\phi(t)=\inf\big\{\norm{\phi(x)-\phi(y)}\del \norm{x-y}\geqslant t\big\}.
$$
\end{defi}
Thus, $\ov\kappa_\phi$ is the pointwise largest map so that 
$
\ov\kappa_\phi\big(\norm{x-x}\big)\leqslant \Norm{\phi(x)-\phi(y)}
$ 
for all $x,y\in X$, while $\kappa_\phi(t)=\inf_{r\geqslant t}\ov\kappa_\phi(r)$ is the pointwise largest monotone map satisfying the same inequality.

\begin{defi}[Expansion moduli]
For a map $X\maps \phi E$ between  Banach spaces, the  {\em exact expansion modulus} $\ov\om_\phi\colon [0,\infty[\,\to [0,\infty]$ is defined by
$$
\ov\om_\phi(t)=\sup\big\{\norm{\phi(x)-\phi(y)}\del \norm{x-y}= t\big\},
$$
and the {\em expansion modulus} $\om_\phi\colon [0,\infty[\,\to [0,\infty]$ by
$$
\om_\phi(t)=\sup\big\{\norm{\phi(x)-\phi(y)}\del \norm{x-y}\leqslant t\big\}.
$$
\end{defi}
The following are evident.
\maths{
\kappa_\phi(t)\leqslant \ov \kappa_\phi(t)\leqslant \ov \om_\phi(t)\leqslant \om_\phi(t).
}

We recall that, to avoid trivialities, all Banach spaces are assumed to have dimension at least $2$. Thus, suppose
 $X\maps \phi E$ is a map  and that $t>0$ and $x,y\in X$. Let $n\geqslant 1$ be minimal so that $\norm{x-y}\leqslant nt$, whereby $(n-1)t\leqslant \norm{x-y}$ and pick $z_0=x,z_1,z_2,\ldots, z_n=y$ so that $\norm{z_{i-1}-z_i}=t$ for $i=1,\ldots, n$. Then
$$
\Norm{\phi(x)-\phi(y)}\;\leqslant\; \sum_{i=1}^n\Norm{\phi(z_{i-1})-\phi(z_i)}\;\leqslant\; n\cdot\ov\om_\phi(t)\leqslant \frac{\ov\om_\phi(t)}{t}\norm{x-y}+\ov\om_\phi(t).
$$
In turn, this shows that
$$
\om_\phi(s)\leqslant \frac{\ov\om_\phi(t)}{t}s+\ov\om_\phi(t)
$$ 
for all $s,t>0$ and so $\limsup_{s\to 0_+}\om_\phi(s)\leqslant \inf_{t>0}\ov\om_\phi(t)$. Because  $\om_\phi$ is non-decreasing, the limit $\lim_{s\to 0_+}\om_\phi(s)=\inf_{s>0}\om_\phi(s)$ exists, whereby
\maths{
\inf_{t>0}\ov\om_\phi(t)
\leqslant \liminf_{t\to 0_+}\ov\om_\phi(t)
\leqslant \limsup_{t\to 0_+}\ov\om_\phi(t)
\leqslant\lim_{t\to 0_+}\om_\phi(t)
\leqslant \inf_{t>0}\ov\om_\phi(t).
}
All in all, we find that
$$
\om(\phi)=\inf_{t>0}\om_\phi(t)
=\lim_{t\to 0_+}\om_\phi(t)
=\lim_{t\to 0_+}\ov\om_\phi(t)=\inf_{t>0}\ov\om_\phi(t).
$$
In particular, we would obtain nothing new by introducing an {\em exact expansion coefficient} by $\ov{\om}(\phi)=\inf_{t>0}\ov\om_\phi(t)$, since this is just the expansion coefficient itself.
Furthermore, if ${\om}(\phi)<\infty$, then $\phi$ is Lipschitz for large distances, that is,
$$
\Norm{\phi(x)-\phi(y)}\leqslant K\norm{x-y}+K
$$
for some constant $K$ and all $x,y\in X$.

Next, the definition of the separation ratio may initially be difficult to parse, so let us briefly restate it more explicitly. 

\begin{lemme}
For a map $X\maps \phi E$ and a constant $K>0$, we have 
$$
\ku R(\phi)>K
$$
if and only if there are constants $\Delta,\delta, \Lambda, \lambda>0$ so that
$$
\norm{x-y}\geqslant \Delta\;\saa\; \Norm{\phi(x)-\phi(y)}\geqslant \delta,
$$
$$
\norm{x-y}\leqslant \Lambda\;\saa\; \Norm{\phi(x)-\phi(y)}\leqslant \lambda
$$
and $\frac\delta\lambda>K$.
\end{lemme}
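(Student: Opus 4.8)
The plan is to unwind both sides of the claimed equivalence through the compression and expansion moduli $\kappa_\phi$ and $\om_\phi$. The point is that the first displayed implication, $\norm{x-y}\geqslant\Delta\saa\norm{\phi(x)-\phi(y)}\geqslant\delta$, says exactly that $\kappa_\phi(\Delta)\geqslant\delta$, while the second says exactly that $\om_\phi(\Lambda)\leqslant\lambda$. Recalling that $\kappa(\phi)=\sup_t\kappa_\phi(t)$, that $\om(\phi)=\inf_t\om_\phi(t)$, and that each individual value $\kappa_\phi(t)$ is a finite real (an infimum of norms over a nonempty set of pairs), the whole statement reduces to comparing these two moduli with the single ratio $\ku R(\phi)$, the only delicate point being the extended-real conventions $\frac a0=\infty$ and $\frac a\infty=0$.

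For the forward implication, I would take constants $\Delta,\delta,\Lambda,\lambda>0$ as in the statement and read off $\kappa(\phi)\geqslant\kappa_\phi(\Delta)\geqslant\delta>0$ and $\om(\phi)\leqslant\om_\phi(\Lambda)\leqslant\lambda<\infty$. If $\om(\phi)>0$ this already gives $\ku R(\phi)=\frac{\kappa(\phi)}{\om(\phi)}\geqslant\frac\delta\lambda>K$, and if $\om(\phi)=0$ then $\ku R(\phi)=\infty>K$ by the convention $\frac a0=\infty$, which applies since $\kappa(\phi)\geqslant\delta>0$.

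For the converse, starting from $\ku R(\phi)>K$ I would first note that the conventions $\frac 0a=0$ and $\frac a\infty=0$ force $\kappa(\phi)>0$ and $\om(\phi)<\infty$. Next I would fix a finite $r>K$ with $\kappa(\phi)>r\cdot\om(\phi)$ — available by taking $r$ just below $\ku R(\phi)$ when $\om(\phi)>0$ and $\kappa(\phi)<\infty$, and $r=K+1$ otherwise (here one uses $\om(\phi)<\infty$). Since $\kappa(\phi)=\sup_t\kappa_\phi(t)>r\cdot\om(\phi)$ there is $\Delta>0$ with $\delta:=\kappa_\phi(\Delta)>r\cdot\om(\phi)\geqslant0$, so $0<\delta<\infty$ and $\frac\delta r>\om(\phi)=\inf_t\om_\phi(t)$, and hence there is $\Lambda>0$ with $\om_\phi(\Lambda)<\frac\delta r<\infty$. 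Setting $\lambda:=\tfrac12\big(\om_\phi(\Lambda)+\tfrac\delta r\big)$ one gets $0<\lambda<\frac\delta r$ together with $\om_\phi(\Lambda)\leqslant\lambda$, so that $\Delta,\delta,\Lambda,\lambda$ satisfy the two implications and $\frac\delta\lambda>r>K$.

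I do not expect a genuine obstacle here; the only step requiring care is the bookkeeping forced by the extended-real conventions $\frac a0=\infty$ and $\frac a\infty=0$ in the degenerate cases $\om(\phi)=0$ and $\kappa(\phi)=\infty$, and once those are isolated as above each inequality is immediate from the definitions of $\kappa_\phi$, $\om_\phi$, $\kappa(\phi)$ and $\om(\phi)$.
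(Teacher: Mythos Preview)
Your argument is correct and follows the same route as the paper's proof: both directions are reduced to the identities $\kappa(\phi)=\sup_t\kappa_\phi(t)$ and $\om(\phi)=\inf_t\om_\phi(t)$, and one then chooses $\Delta,\Lambda$ realising the supremum and infimum closely enough. The paper simply picks $\delta=\kappa_\phi(\Delta)$ and $\lambda=\om_\phi(\Lambda)$ directly, whereas you take extra care with the extended-real conventions and with ensuring $\lambda>0$; this caution is harmless but not strictly needed, since $\kappa(\phi)>0$ forces $\om_\phi(\Lambda)>0$ for every $\Lambda>0$ (otherwise $\phi$ would be constant by chaining). One cosmetic point: you have the labels ``forward'' and ``converse'' reversed relative to the biconditional as stated.
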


\begin{proof}
Note that, if $\ku R(\phi)>K$, we may find $\Delta,\Lambda>0$ so that $\frac{\kappa_\phi(\Delta)}{\om_\phi(\Lambda)}>K$. Letting $\delta=\kappa_\phi(\Delta)$ and $\lambda=\om_\phi(\Lambda)$, the two implications follow. 

Conversely, if the two implications hold for some $\Delta,\delta, \Lambda, \lambda>0$ so that $\frac\delta\lambda>K$, then 
\maths{
\ku R(\phi)=\frac{\sup_{t<\infty}\kappa_\phi(t)}{\inf_{t>0}\om_\phi(t)}\geqslant \frac{\kappa_\phi(\Delta)}{\om_\phi(\Lambda)}\geqslant \frac\delta\lambda>K,
}
which verifies the lemma.
\end{proof}

\begin{proof}[Proof of Theorem \ref{main}]As noted, if $X\maps \phi E$ is a coarse embedding between arbitrary Banach spaces, then $\ku R(\phi)=\infty$, which proves one direction of implication. Also, under the stated assumption on $E$, by Theorem 1.16 \cite{geomgroups}, we have that $X$ coarsely embeds into $E$ if and only if $\ku R(\phi)=\infty$ for some map $X\maps \phi E$. So suppose instead only that $\sup_{\phi}\ku R(\phi)=\infty$. We then construct a coarse embedding $X\maps \psi E$ as follows. 

Because $E\oplus E$ embeds isomorphically into $E$, we may inductively construct two sequences $E_n, Z_n$ of closed linear subspaces of $E$ all isomorphic to $E$ so that  
$$
E_{n+1}\oplus Z_{n+1}\subseteq Z_n.
$$
Concretely, we simply begin with an isomorphic copy  $E\oplus E$ inside of $E$ and let $E_1$ and $Z_1$ be respectively the first and second summand. Again, pick an isomorphic  copy of $E\oplus E$ inside of $Z_1$ with first and second summand denoted respectively $E_2$ and $Z_2$, etc. It thus follows that
$$
E\;\supseteq\; E_1\oplus Z_1\;\supseteq\; E_1\oplus E_2\oplus Z_2\;\supseteq\;  E_1\oplus E_2\oplus E_3\oplus Z_3\;\supseteq\; \ldots
$$ 
is a decreasing sequence of closed linear subspaces of $E$. Let 
$$
V_n =E_1\oplus E_2\oplus \cdots\oplus E_n\oplus Z_n
$$
and set $V=\bigcap_{n=1}^\infty V_n$. We note that $V$ is a closed linear subspace of $E$ in which each $E_n$ is a closed subspace complemented by a bounded projection $V\maps{P_n}E_n$ so that  so that $E_m\subseteq \ker P_n$ whenever $n\neq m$. On the other hand, we have no uniform bound on the norms $\norm{P_n}$.

Fix now a sequence of isomorphisms $E\maps {T_n}E_n$ and find maps $X\maps {\theta_n} E$ with $\ku R(\theta_n)>n\,2^n\norm{P_n}\norm{T_n}\norm{T_n\inv}$. Observe that, for all $t>0$, 
$$
\kappa_{T_n\circ\theta_n}(t)\geqslant \frac{\kappa_{\theta_n}(t)}{\norm{T_n\inv}},
$$
whereas
$$
\om_{T_n\circ\theta_n}(t)\leqslant {\norm{T_n}}\cdot \om_{\theta_n}(t),
$$ 
which shows that
$$
\ku R(T_n\circ\theta_n)\geqslant \frac{\ku R(\theta_n)}{\norm{T_n}\norm{T_n\inv}}\geqslant n\,2^n\norm{P_n}.
$$
Setting $\phi_n=T_n\circ\theta_n$, we find that $\lim_n\frac{\ku R(\phi_n)}{2^n\norm{P_n}}=\infty$. The conclusion of the theorem therefore follows directly from Lemma \ref{unif saa coarse} below.
\end{proof}

\begin{lemme}\label{unif saa coarse}
Suppose $X$ and $E$ are Banach spaces and $E\maps{P_n} E$ is a sequence of bounded linear projections onto subspaces $E_n\subseteq E$ so that  $E_m\subseteq \ker P_n$ for all $m\neq n$. Assume also that there is a sequence of maps 
$$
X\maps {\phi_n} E_n
$$ 
so that 
$$
\lim_n\frac {\ku R(\phi_n)}{2^n\norm{P_n}}=\infty.
$$
Then $X$ coarsely embeds into $E$.
\end{lemme}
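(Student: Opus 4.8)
The plan is to construct the desired coarse embedding $X\maps\psi E$ as a series $\psi=\sum_{n}\psi_n$ that converges pointwise in $E$, where each $\psi_n$ maps $X$ into the subspace $E_n$, and then to recover the large-scale behaviour of $\psi$ coordinatewise by applying the projections $P_n$. The one subtle point is that the coefficient $\om(\phi)$ is infinitesimal, and must first be transported to the unit scale. This I would do by precomposing with a dilation of the \emph{domain}: for any map $\phi\colon X\to E$ and any $s>0$, the dilated map $x\mapsto\phi(sx)$ satisfies $\kappa_{\phi(s\cdot)}(t)=\kappa_\phi(st)$ and $\om_{\phi(s\cdot)}(t)=\om_\phi(st)$ for all $t>0$, hence has exactly the same compression and expansion coefficients as $\phi$, while its expansion modulus at $t=1$ equals $\om_\phi(s)$. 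Since $\om_\phi(s)\downarrow\om(\phi)$ as $s\to0_+$, for each $n$ one can choose $s_n>0$ and set $\phi_n'(x)=\phi_n(s_nx)$ so that $\om_{\phi_n'}(1)=\om_{\phi_n}(s_n)$ is small enough to guarantee
$$\nu_n:=\frac{\kappa(\phi_n)}{2^{n}\norm{P_n}\,\om_{\phi_n'}(1)}\;\longrightarrow\;\infty ,$$
which is possible precisely because $\om_{\phi_n}(s_n)\to\om(\phi_n)$ and $\frac{\ku R(\phi_n)}{2^n\norm{P_n}}\to\infty$ by hypothesis. (If $\kappa(\phi_n)=\infty$ for some $n$, then $\phi_n'$ is already a coarse embedding of $X$ into $E_n\subseteq E$ and we are done; so we may assume $0<\kappa(\phi_n)<\infty$, and therefore also $0<\om_{\phi_n'}(1)<\infty$, for all $n$, the finitely many $n$ with $\ku R(\phi_n)=0$ being harmlessly zeroed out.)

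With this in place I would normalise: put $a_n=2^{-n}/\om_{\phi_n'}(1)$ and $\psi_n(x)=a_n\big(\phi_n'(x)-\phi_n'(0)\big)$, a map from $X$ into $E_n$. Using the inequality $\om_\phi(t)\leqslant\om_\phi(1)(t+1)$ (a special case of the bound recorded at the start of this section), one gets $\norm{\psi_n(x)-\psi_n(y)}=a_n\norm{\phi_n'(x)-\phi_n'(y)}\leqslant a_n\,\om_{\phi_n'}(1)(\norm{x-y}+1)=2^{-n}(\norm{x-y}+1)$, so the series $\psi(x)=\sum_n\psi_n(x)$ converges in $E$ for every $x$ and $\norm{\psi(x)-\psi(y)}\leqslant\norm{x-y}+1$ for all $x,y\in X$. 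In particular $\om(\psi)\leqslant1<\infty$, i.e.\ $\psi$ is Lipschitz for large distances.

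The lower estimate is where the projections enter. Since $P_n$ is bounded, hence continuous, and since $\psi_m(x)\in E_m\subseteq\ker P_n$ whenever $m\neq n$ while $P_n$ acts as the identity on $E_n$, termwise application gives $P_n\psi(x)=\psi_n(x)$, so
$$\norm{\psi(x)-\psi(y)}\;\geqslant\;\frac{\norm{P_n\big(\psi(x)-\psi(y)\big)}}{\norm{P_n}}\;=\;\frac{a_n}{\norm{P_n}}\,\norm{\phi_n'(x)-\phi_n'(y)} .$$
Given $N$, pick $n$ with $\nu_n>N$. Because $\frac{a_n}{\norm{P_n}}\kappa(\phi_n')=\frac{a_n}{\norm{P_n}}\kappa(\phi_n)=\nu_n>N$ and $\kappa_{\phi_n'}(R)$ increases to $\kappa(\phi_n')$ as $R\to\infty$, there is $R_N$ with $\frac{a_n}{\norm{P_n}}\kappa_{\phi_n'}(R_N)>N$; hence $\norm{x-y}\geqslant R_N$ forces $\norm{\psi(x)-\psi(y)}>N$. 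Thus $\kappa(\psi)=\infty$, so $\psi$ is expanding, and together with the previous paragraph $\psi$ is a coarse embedding of $X$ into $E$.

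I expect the dilation in the first paragraph to be the real content. The naive attempt with a pure target rescaling $\psi_n=a_n\phi_n$ breaks down: convergence of $\sum_n\psi_n(x)$ forces $\sum_na_n\om_{\phi_n}(1)<\infty$, yet the separation that $P_n$ detects is governed by $\tfrac{a_n\kappa(\phi_n)}{\norm{P_n}}$, and $\om_{\phi_n}(1)$ may be arbitrarily large compared with $\om(\phi_n)$ — whereas it is $\om(\phi_n)$, not $\om_{\phi_n}(1)$, that the assumption on $\ku R(\phi_n)$ bounds. Precomposing with $x\mapsto s_nx$ is exactly the device that swaps $\om(\phi_n)$ for $\om_{\phi_n'}(1)$ at no cost to $\kappa$, and it is the only point at which one uses that the domain $X$ is a Banach space — i.e.\ admits dilations — and not merely a metric space.
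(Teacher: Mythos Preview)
Your proof is correct and follows the same overall strategy as the paper's: precompose each $\phi_n$ with a dilation of the domain, rescale in the target so that the series $\psi=\sum_n\psi_n$ converges absolutely in $E$, and then recover the lower bound coordinatewise via the projections $P_n$. Your upper-bound argument is in fact a little cleaner than the paper's: by invoking the global estimate $\om_\phi(t)\leqslant\om_\phi(1)(t+1)$ from the start you obtain the affine bound $\norm{\psi(x)-\psi(y)}\leqslant\norm{x-y}+1$ directly, whereas the paper fixes four constants $\Delta_n,\delta_n,\Lambda_n,\lambda_n$, only secures $\norm{\psi_n(x)-\psi_n(y)}\leqslant 2^{-n}$ on the ball $\norm{x-y}\leqslant n$, and must then re-run a chain argument to control the finitely many terms with $n<\norm{x-y}$. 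One small remark on your closing commentary: the inequality $\om_\phi(t)\leqslant\om_\phi(1)(t+1)$ itself already uses the Banach-space (indeed, geodesic of dimension $\geqslant2$) structure of $X$, so the domain dilation is not quite the \emph{only} place this is used.
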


\begin{proof}
By composing with a translation, we may suppose that $\phi_n(0)=0$ for each $n$. Because $\lim_n\frac {\ku R(\phi_n)}{2^n\norm{P_n}}=\infty$, we may also find constants  $\Delta_n,\delta_n,\Lambda_n, \lambda_n>0$ so that
$$
\norm{x-y}\geqslant \Delta_n\;\saa\; \Norm{\phi_n(x)-\phi_n(y)}\geqslant \delta_n
$$
and 
$$
\norm{x-y}\leqslant \Lambda_n\;\saa\; \Norm{\phi_n(x)-\phi_n(y)}\leqslant \lambda_n,
$$
while 
$$
\lim_n\frac{\delta_n}{\lambda_n2^n\norm{P_n}}=\infty.
$$
For every $n$, we let
$$
\psi_n(x)=\frac{1}{\lambda_n2^n}\cdot\phi_n\Big(\tfrac{\Lambda_n}n\cdot x\Big).
$$
Then
\maths{
\norm{x-y}\leqslant n
&\;\saa\; \Norm{\tfrac{\Lambda_n}n\cdot  x-\tfrac{\Lambda_n}n\cdot  y}\leqslant \Lambda_n\\
&\;\saa\; \NORM{\phi_n\big(\tfrac{\Lambda_n}n\cdot  x\big)-\phi_n\big(\tfrac{\Lambda_n}n\cdot  y\big)}\leqslant \lambda_n\\
&\;\saa\; \norm{\psi_n(x)-\psi_n(y)}\leqslant 2^{-n}.
}
Similarly, 
\maths{
\norm{x-y}\geqslant \frac{n\Delta_n}{\Lambda_n}
&\;\saa\; \Norm{\tfrac{\Lambda_n}n\cdot  x-\tfrac{\Lambda_n}n\cdot  y}\geqslant \Delta_n\\
&\;\saa\;\NORM{\phi_n\big(\tfrac{\Lambda_n}n\cdot  x\big)-\phi_n\big(\tfrac{\Lambda_n}n\cdot  y\big)}\geqslant \delta_n\\
&\;\saa\; \Norm{\psi_n(x)-\psi_n(y)}\geqslant \frac{\delta_n}{\lambda_n2^n}.
}
In particular, if $\norm{x-y}\leqslant m$, then $\norm{x-y}\leqslant n$ for all $n\geqslant m$, whereby
$$
\sum_{n=1}^\infty\Norm{\psi_{n}(x)-\psi_{n}(y)}\leqslant \sum_{n=1}^{m-1}\Norm{\psi_{n}(x)-\psi_{n}(y)}+\sum_{n=m}^\infty2^{-n}<\infty.
$$
Also, $\psi_n(0)=0$ for all $n$, which shows that, for all $x\in X$, 
$$
\sum_{n=1}^\infty\Norm{\psi_{n}(x)}<\infty
$$
and so the series $\sum_{n=1}^\infty\psi_{n}(x)$ is  absolutely convergent in $E$. We may therefore define a map $X\maps \psi E$ by letting
$$
\psi(x)=\sum_{n=1}^\infty\psi_{n}(x).
$$

We now verify that $\psi$ is a coarse embedding of $X$ into $E$. First, let $m\geqslant 1$ be any given natural number and suppose that $x,y\in X$ satisfy $\norm{x-y}\leqslant m$. Then we may find $z_0=x, z_1, z_2, \ldots, z_m=y$ so that $\norm{z_{i-1}-z_{i}}\leqslant 1$ for all $i$ and so, in particular,  $\norm{\psi_n(z_{i-1})-\psi_n(z_i)}\leqslant 2^{-n}$ for all $n$. It thus follows that 
\maths{
\Norm{\psi(x)-\psi(y)}
&=\NORM{\sum_{n=1}^\infty\psi_{n}(x)-\sum_{n=1}^\infty\psi_{n}(y)}\\
&\leqslant \sum_{n=1}^{m-1} \Norm{\psi_n(x)-\psi_n(y)}
+ \sum_{n=m}^\infty\Norm{\psi_n(x)-\psi_n(y)}\\
&\leqslant \sum_{n=1}^{m-1}  \Norm{\psi_n(z_0)-\psi_n(z_m)}
+ \sum_{n=m}^\infty2^{-n}\\
&\leqslant \sum_{n=1}^{m-1}  \NORM{\sum_{i=1}^m\big(\psi_n(z_{i-1})-\psi_n(z_i)\big)}
+ 2^{-m+1}\\
&\leqslant \sum_{n=1}^{m-1}  \sum_{i=1}^m\Norm{\psi_n(z_{i-1})-\psi_n(z_i)}
+ 2^{-m+1}\\
&\leqslant \sum_{n=1}^{m-1}  \sum_{i=1}^m2^{-n}
+ 2^{-m+1}\\
&= \sum_{n=1}^{m-1}  m2^{-n}
+ 2^{-m+1}\\
&< m+ 2^{-m+1}.
}
In other words, for all $m$ and $x,y\in X$, we have 
$$
\norm{x-y}\leqslant m\;\saa\; \Norm{\psi(x)-\psi(y)}< m+ 2^{-m+1}.
$$
Conversely, if $m$ is any given number, find $n$ large enough so that $\frac{\delta_n}{\lambda_n2^n\norm{P_n}}\geqslant m$. Then, if $\norm{x-y}\geqslant \frac{n\Delta_n}{\Lambda_n}$, we have
\maths{
\norm{\psi(x)-\psi(y)}
&\geqslant\frac1{ \norm{P_n}}\Norm{P_{n}\psi(x)-P_{n}\psi(y)}\\
&=\frac1{ \norm{P_n}}     \Norm{\psi_n(x)-\psi_n(y)}\\
&\geqslant \frac{\delta_n}{\lambda_n2^n\norm{P_n}}\\
&\geqslant m.
}
Taken together, these two conditions show that $\psi$ is a coarse embedding.
\end{proof}

\begin{proof}[Proof of Theorem \ref{main2}]
Suppose $X$ and $E$ are Banach spaces so that 
$$
\sup_{\phi}\;\ov{\ku R}(\phi)=\infty,
\vspace{-.15cm}
$$
and $E$ have non-trivial type, i.e., ${\sf type}(E)>1$. We then note that also ${\sf type}\big(\ell_2(E)\big)={\sf type}(E)>1$ and 
${\sf cotype}\big(\ell_2(E)\big)={\sf cotype}(E)$. Thus, if we can show that $X$ maps into $\ell_2(E)$ by a map that is Lipschitz for large distances and solvent, then, by the previously mentioned result of Braga (Theorem 1.3  \cite{Braga2}), we will have that 
$$
{\sf cotype}(X)\leqslant {\sf cotype}\big(\ell_2(E)\big)={\sf cotype}(E).
$$
So fix a sequence of maps $X\maps{\phi_n}E$ so that $\ov{\ku R}(\phi_n)> n2^n$ for all $n\geqslant 1$. This means that there are $\Delta_n,\delta_n, \Lambda_n, \lambda_n>0$ so that 
$$
\norm{x-y}=\Delta_n\;\saa\; \Norm{\phi_n(x)-\phi_n(y)}\geqslant \delta_n
$$
and
$$
\norm{x-y}\leqslant \Lambda_n\;\saa\; \Norm{\phi_n(x)-\phi_n(y)}\leqslant \lambda_n
$$
and $\tfrac{\delta_n}{\lambda_n}>n2^n$. We then define $\psi_n$ by $\psi_n(x)=\tfrac{1}{2^n\lambda_n}\phi_n\big(\tfrac{\Lambda_n}nx\big)$ and note that
$$
\norm{x-y}\leqslant n\;\saa\; \Norm{\psi_n(x)-\psi_n(y)}\leqslant 2^{-n},
$$
whereas
$$
\norm{x-y}=\tfrac{n\Delta_n}{\Lambda_n}\;\saa\; \Norm{\psi_n(x)-\psi_n(y)}\geqslant n.
$$
We finally define $X\maps\psi \ell_2(E)$ by $\psi(x)=\big(\psi_1(x),\psi_2(x),\ldots\big)$ and note that $\psi$ is well-defined by the above and satisfies $\om(\psi)\leqslant \om_\psi(1)\leqslant 1$ and $\ov\kappa(\psi)\geqslant \ov\kappa_\psi\big(\tfrac{n\Delta_n}{\Lambda_n}\big)\geqslant n$ for all $n$. In other words, $\psi$ is Lipschitz for large distances and solvent.
\end{proof}

Another way to prove Theorem \ref{main2} is first to establish an analogue to Theorem \ref{main} for the quantity $\sup_{\phi}\;\ov{\ku R}(\phi)$ in place of $\sup_{\phi}\;{\ku R}(\phi)$. This is done by observing that the proof of Theorem \ref{main} above can be changed to prove the following statement.
\begin{thm}
Suppose $X$ and $E$ are Banach spaces so that $E\oplus E$ isomorphically embeds into $E$. Assume also that
$$
\sup_{\phi}\;\ov{\ku R}(\phi)=\infty,
\vspace{-.15cm}
$$
then there is a map $X\maps\phi E$ that is Lipschitz for large distances and solvent.
\end{thm}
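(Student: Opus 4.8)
The plan is to retrace the proof of Theorem \ref{main} almost verbatim, systematically replacing the separation ratio $\ku R$ by the exact separation ratio $\ov{\ku R}$ and the compression modulus $\kappa_\phi$ by the exact compression modulus $\ov\kappa_\phi$, and to verify that the conclusion ``coarse embedding'' degrades precisely to ``Lipschitz for large distances and solvent''. First, exactly as before, I would use the hypothesis that $E\oplus E$ embeds isomorphically into $E$ to build the decreasing sequence $V_n=E_1\oplus\cdots\oplus E_n\oplus Z_n$, set $V=\bigcap_n V_n$, and obtain bounded linear projections $V\maps{P_n}E_n$ with $E_m\subseteq\ker P_n$ for $m\neq n$ (with no uniform control on $\norm{P_n}$). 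Fixing isomorphisms $E\maps{T_n}E_n$ and using $\sup_\phi\ov{\ku R}(\phi)=\infty$, I would pick maps $X\maps{\theta_n}E$ with $\ov{\ku R}(\theta_n)>n\,2^n\norm{P_n}\norm{T_n}\norm{T_n\inv}$. The only computation is that $\ov\kappa_{T\circ\theta}(t)\geqslant\ov\kappa_\theta(t)/\norm{T\inv}$ and $\om_{T\circ\theta}(t)\leqslant\norm{T}\cdot\om_\theta(t)$ for an isomorphism $T$ (the first because $\norm{x-y}=t$ forces $\norm{T\theta(x)-T\theta(y)}\geqslant\norm{\theta(x)-\theta(y)}/\norm{T\inv}$), and both pass to the exact separation ratios without difficulty. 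Thus $\phi_n:=T_n\circ\theta_n$ maps $X$ into $E_n$ and $\lim_n\frac{\ov{\ku R}(\phi_n)}{2^n\norm{P_n}}=\infty$.

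Second, I would prove the analogue of Lemma \ref{unif saa coarse}: given such $P_n$ and maps $X\maps{\phi_n}E_n$ with $\lim_n\frac{\ov{\ku R}(\phi_n)}{2^n\norm{P_n}}=\infty$, there is a map $X\maps\psi E$ that is Lipschitz for large distances and solvent. After translating so that $\phi_n(0)=0$, the inequality on $\ov{\ku R}(\phi_n)$ unwinds --- now via the definition of the exact compression coefficient rather than the monotone one --- to constants $\Delta_n,\delta_n,\Lambda_n,\lambda_n>0$ with
$$
\norm{x-y}=\Delta_n\;\saa\;\Norm{\phi_n(x)-\phi_n(y)}\geqslant\delta_n,\qquad\norm{x-y}\leqslant\Lambda_n\;\saa\;\Norm{\phi_n(x)-\phi_n(y)}\leqslant\lambda_n,
$$
together with $\lim_n\frac{\delta_n}{\lambda_n2^n\norm{P_n}}=\infty$. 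Setting $\psi_n(x)=\frac1{\lambda_n2^n}\phi_n\big(\frac{\Lambda_n}n\cdot x\big)$ gives, just as in Lemma \ref{unif saa coarse}, the implications $\norm{x-y}\leqslant n\;\saa\;\norm{\psi_n(x)-\psi_n(y)}\leqslant 2^{-n}$ and $\norm{x-y}=\frac{n\Delta_n}{\Lambda_n}\;\saa\;\norm{\psi_n(x)-\psi_n(y)}\geqslant\frac{\delta_n}{\lambda_n2^n}$. The series $\psi(x)=\sum_n\psi_n(x)$ converges absolutely by the upper estimates --- which are word for word those in Lemma \ref{unif saa coarse} --- and the same chaining argument over intermediate points shows $\norm{x-y}\leqslant m\;\saa\;\Norm{\psi(x)-\psi(y)}<m+2^{-m+1}$, so $\om(\psi)\leqslant\om_\psi(1)<\infty$; that is, $\psi$ is Lipschitz for large distances.

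Third --- the only real departure from Theorem \ref{main} --- I would check that $\psi$ is \emph{almost} expanding, $\ov\kappa(\psi)=\infty$, rather than merely expanding. Given $m$, choose $n$ with $\frac{\delta_n}{\lambda_n2^n\norm{P_n}}\geqslant m$; then for \emph{every} pair $x,y$ with $\norm{x-y}=\frac{n\Delta_n}{\Lambda_n}$ one has
$$
\Norm{\psi(x)-\psi(y)}\geqslant\frac1{\norm{P_n}}\Norm{P_n\psi(x)-P_n\psi(y)}=\frac1{\norm{P_n}}\Norm{\psi_n(x)-\psi_n(y)}\geqslant\frac{\delta_n}{\lambda_n2^n\norm{P_n}}\geqslant m,
$$
using that $P_n$ is a bounded linear projection annihilating every $E_k$ with $k\neq n$. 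Hence $\ov\kappa_\psi\big(\frac{n\Delta_n}{\Lambda_n}\big)\geqslant m$ and, $m$ being arbitrary, $\ov\kappa(\psi)=\infty$. Since a map that is Lipschitz for large distances and almost expanding is solvent (Lemma 8 \cite{equivariant}), $\psi$ is as required. I do not anticipate a genuine obstacle here: the exact compression modulus behaves as well as the monotone one under the rescalings and compositions used above, and the single point deserving care is that one must phrase the lower bound at a \emph{single} distance $\frac{n\Delta_n}{\Lambda_n}$ --- which the ``exact'' definitions deliver automatically --- instead of over a half-line of distances as in Lemma \ref{unif saa coarse}.
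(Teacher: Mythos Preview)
Your proposal is correct and follows precisely the route the paper indicates: it merely says that the theorem ``is done by observing that the proof of Theorem \ref{main} above can be changed'' to reach this conclusion, and you have spelled out exactly those changes---replacing $\ku R$, $\kappa_\phi$ by $\ov{\ku R}$, $\ov\kappa_\phi$, carrying the lower bound at a single scale $\tfrac{n\Delta_n}{\Lambda_n}$ rather than a half-line, and invoking Lemma~8 of \cite{equivariant} to pass from almost expanding to solvent.
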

In order to obtain Theorem \ref{main2}, one then notes that $\ell_2(E)\oplus \ell_2(E)\iso\ell_2(E)$ and so, if $\sup_{\phi}\;\ov{\ku R}(\phi)=\infty$, where the supremum is taken over all maps $X\maps\phi E$, we have a map $X\maps\psi \ell_2(E)$ that is both Lipschitz for large distances and solvent.


\section{Examples}\label{sec:examples}
Theorem \ref{main} indicates that, to every pair of Banach spaces $X$ and $E$, we may associate the constant 
$$
\ku{CR}(X,E)=\sup\big\{\ku R(\phi)\del \phi\colon X\to E \text{ is a map }\big\}
$$
that under very mild assumptions on $E$  measures the extent of coarse embeddability of $X$ into $E$. The number $\ku{CR}(X,E)$ will be termed the {\em coarse embeddability ratio} of $X$ in $E$.

As the next example shows, the main interest lies in the case when $\ku{CR}(X,E)>1$, whereas $\ku{CR}(X,E)=1$ is easily obtained. 

\begin{exa}\label{constant=1}
If $X$ is separable and $E$ is a Banach space that admits an infinite {\em equilateral} set, that is, an infinite subset $A\subseteq E$ so that, for some $\delta>0$,
$$
\norm{x-y}=\delta
$$
for all distinct $x,y\in A$, then we have $\ku{CR}(X,E)\geqslant 1$. To see this, let $(Y_x)_{x\in A}$ be a partition of $X$ indexed by the set $A$ into subsets $Y_x\subseteq X$ of diameter at most $1$ and let $X\maps \phi E$ be defined by
$$
\phi(y)=x \;\equi \; x\in A\;\&\; y\in Y_x.
$$
Observe that, if $\norm{y-y'}>1$, then $y$ and $y'$ must belong to different pieces of the partition and so $\norm{\phi(y)-\phi(y')}=\delta$. On the other hand, $\norm{\phi(y)-\phi(y')}\leqslant \delta$ for all $y,y'\in X$, so we see that $
\kappa_\phi(t)=\delta$ for all $t>1$, whereas $\om_\phi(t)\leqslant \delta$ for all $t>0$. So $\ku R(\phi)\geqslant 1$.  

In particular, this reasoning applies when $E$ is one of the classical Banach spaces $\ell_p$, $c_0$, $L_p$ or even the Tsirelson space $T$.  Indeed, in these spaces, the standard unit basis $(e_n)_{n=1}^\infty$ is an infinite equilateral set (or, in the case of Tsirelson's space, $(e_n)_{n=2}^\infty$ is equilateral). Here we remark that $T^*$ is the reflexive space originally constructed and described by B. S. Tsirelson  \cite{tsirelson}, while $T$ is its $\ell_1$-asymptotic dual  whose explicit construction was given by  T. Figiel and W. B. Johnson \cite{figiel}. 

Let us also observe that, if $E$ is infinite-dimensional, then $E$ admits an equivalent renorming with respect to which it has an infinite equilateral set. Indeed, since $E$ is infinite-dimensional, it contains a normalised basic sequence $(e_n)_{n=1}^\infty$. We define a new equivalent norm $\triple\cdot$ on the closed linear space $[e_n]_{n=1}^\infty$ by letting
\maths{
\TRIPLE   {\sum_{n=1}^\infty a_ne_n}
=\sup&\Big\{    
\NORM{\sum_{n\in I} a_ne_n}+\NORM{\sum_{n\in J} a_ne_n} \;\Del \\
&I, J \text{ are intervals and $i<j$ for all $i\in I$ and $j\in J$}\Big\}.
}
As $\norm{e_n}=1$ for all $n$, we find that $\triple{e_i-e_j}=2$ for all $i<j$ and so $(e_n)_{n=1}^\infty$ is an equilateral set of the norm $\triple\cdot$. It now suffices to notice that $\triple\cdot$ extends to an equivalent norm on all of $E$. 
\end{exa}

Example \ref{constant=1} illustrates that the embeddability ratio $\ku{CR}(X,E)$ is sensitive to the specific norm on $E$, but not to the choice of norm on $X$. On the other hand,  the condition $\ku{CR}(X,E)=\infty$ only depends on the isomorphism class of $E$. Note also that, if $X$, $Y$ and $Z$ are Banach spaces so that $\ku{CR}(X,Y)=\infty$, then 
$$
\ku{CR}(X,Z)\geqslant \ku{CR}(Y,Z).
$$

An important non-embeddability result was recently established by F. Baudier, G. Lancien and T. Schlumprecht \cite{bls}, who showed that the separable Hilbert space $\ell_2$ does not coarsely embed into Tsirelson's space $T^*$. 
It is known that $T^*$ is minimal, that is, $T^*$ embeds isomorphically into all of its infinite-dimensional subspaces (see Chapter VI \cite{casazza}). Also, $T^*$ has an unconditional basis and can therefore be written as a direct sum of two infinite-dimensional subspaces. It therefore follows that $T^*\oplus T^*$ embeds isomorphically into $T^*$ and thus $E=T^*$ satisfies the assumption of Theorem \ref{main}. It follows that the coarse embeddability ratio $\ku{CR}(\ell_2,T^*)$ is finite and we now proceed to give an upper bound.

\begin{prop}
If $T^*$ denotes Tsirelson's space, then 
$$
\ku{CR}(\ell_2,T^*)\leqslant 4.
$$
\end{prop}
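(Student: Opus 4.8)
Since $\ku{CR}(\ell_2,T^*)=\sup_\phi\ku R(\phi)$, it suffices to show that every map $\ell_2\maps\phi T^*$ satisfies $\ku R(\phi)\leqslant 4$. If $\om(\phi)=\infty$ this is clear, as then $\ku R(\phi)=0$, so I may assume $\phi$ is Lipschitz for large distances. My plan is to deduce the bound from the following quantitative version of the Baudier--Lancien--Schlumprecht phenomenon \cite{bls}:
\[
\text{for all }\eps,R>0\text{ there are }x,y\in\ell_2\text{ with }\norm{x-y}\geqslant R\text{ and }\Norm{\phi(x)-\phi(y)}\leqslant 4\,\om(\phi)+\eps .
\]
Granting this, fixing $\eps>0$ and taking the supremum over $R$ in the definition of $\kappa(\phi)$ yields $\kappa(\phi)\leqslant 4\,\om(\phi)+\eps$, hence $\kappa(\phi)\leqslant 4\,\om(\phi)$ and $\ku R(\phi)\leqslant 4$; the uniformly continuous case $\om(\phi)=0$ is included, the inequality then forcing $\kappa(\phi)=0$. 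I would emphasise that the naive approach --- chaining $\phi$ along a geodesic in $\ell_2$ between two distant points --- cannot work: for a map that is merely Lipschitz for large distances the expansion $\om_\phi(s)$ of a short step stays bounded below as $s\to 0$, so a chain of $\ell$ fine steps yields only the useless bound of order $\ell$. The gain must come from a concentration effect special to Tsirelson's space.

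To prove the displayed inequality I would run the argument of \cite{bls} while tracking constants. Precompose $\phi$ with the standard uniform coarse embeddings into $\ell_2$ of the Hamming-type graphs $H^\om_k$ on $[\N]^k$ underlying that argument, rescaled so that the resulting maps $g_k\colon H^\om_k\to T^*$ are Lipschitz for the graph metric with Lipschitz constant tending to $\om(\phi)$ as $k\to\infty$, while $g_k$ separates some pair of vertices at maximal graph distance by at least $\kappa_\phi(R)$ --- this last pair corresponding under the embedding to a pair of points of $\ell_2$ at distance $\geqslant R$. A Ramsey/stabilisation argument over the countably many possible positions of the coordinates lets one pass to $[\M]^k$, for an infinite $\M\subseteq\N$, on which moving along a single edge of $H^\om_k$ perturbs $g_k$, up to an error that can be made arbitrarily small, only inside a far-out \emph{successive} block of the unit vector basis of $T^*$. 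One then chains such moves along a path joining two fixed vertices $\bar m,\bar n\in[\M]^k$ --- splitting the path, if necessary, through a common refinement into two pieces each consisting of moves supported on genuinely successive admissible blocks --- and invokes the asymptotic $c_0$-behaviour of $T^*$: successive admissible blocks $u_1<\dots<u_r$ satisfy $\Norm{\sum_i u_i}_{T^*}\leqslant 2\max_i\norm{u_i}$, the constant $2$ being precisely the $\tfrac12$ in the implicit norm equation of $T$. This bounds $\Norm{g_k(\bar m)-g_k(\bar n)}$ by $2\cdot 2\cdot(\om(\phi)+o(1))$, and undoing the rescaling delivers the inequality above.

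The main obstacle, and the reason the bound comes out exactly $4$, is entirely in this concentration estimate: one factor of $2$ is the asymptotic-$c_0$ constant of $T^*$ and is unavoidable, while the second factor of $2$ is a triangle-inequality step --- routing the path from $\bar m$ to $\bar n$ through a midpoint so as to break it into two ``monotone'' halves on which the successive-block structure is available. The genuinely delicate part is to organise the stabilisation finely enough that the ``approximately successive block'' approximations do not let errors accumulate and inflate the constant beyond $4$. Once the estimate is established the conclusion is immediate: no map $\ell_2\maps\phi T^*$ has separation ratio exceeding $4$, so $\ku{CR}(\ell_2,T^*)\leqslant 4$, and one recovers, with an explicit constant, the non-embedding of \cite{bls}.
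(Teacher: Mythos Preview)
Your strategy is correct and matches the paper's proof. The paper argues by contradiction, normalises via dilations so that $\om_\phi(\sqrt 2)\leqslant 1$ and $\kappa_\phi(\Delta)\geqslant\delta>4$, embeds $([\N]^k,d_J)$ into $\ell_2$ via $A\mapsto\sum_{n\in A}e_n$, and then quotes Proposition~4.1 of \cite{bls} as a black box for the concentration step you sketch; the $2+2$ bound arises exactly as you describe, from the asymptotic-$c_0$ estimate $\Norm{y_1^A+\cdots+y_k^A}\leqslant 2$ applied once to each of $A$ and $B$ together with the triangle inequality through the common vector $y$.
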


\begin{proof}
We rely on the analysis of \cite{bls}, which also contains additional details about the construction below. 
For the proof, assume towards a contradiction that $\ell_2\maps{\phi}E$ satisfies $\ku R(\phi)>4$. Then by pre and post-composing $\phi$ with dilations we can suppose that, for some constants $\Delta>0$ and $\delta>4$, we have
$$
\norm{x-y}\geqslant \Delta\;\saa\; \Norm{\phi(x)-\phi(y)}\geqslant \delta
$$
and
$$
\norm{x-y}\leqslant \sqrt 2\;\saa\; \Norm{\phi(x)-\phi(y)}\leqslant 1.
$$
Let $(e_n)_{n=1}^\infty$ be the standard unit vector basis for $\ell_2$  and set $\eps=\frac{\delta-4}2$. Let also $k$ be large enough so that $\sqrt{2k}\geqslant \Delta$ and let $[\N]^k$ be the collection of all $k$-element subsets of $\N$ equipped with the {\em Johnson metric},
$$
d_J(A,B)=\frac{|A\sym B|}2.
$$
Observe that $d_J$ is simply the shortest-path metric on the graph whose vertices is $[\N]^k$ and where two vertices $A$ and $B$ are connected by an edge provided that $|A\sym B|=2$. Let then $f\colon [\N]^k\to T^*$ be defined by
$$
f(A)=\phi\Big(\sum_{n\in A}e_n\Big).
$$
Observe that, if $d_J(A,B)=1$, then 
$$
\NORM{\sum_{n\in A}e_n-\sum_{n\in B}e_n}=\sqrt{|A\sym B|}=\sqrt 2
$$
and so $\norm{f(A)-f(B)}\leqslant 1$. Thus, $f$ is Lipschitz with constant $1$.

By Proposition 4.1 \cite{bls} there is an infinite subset $\M\subseteq \N$ and some $y\in T^*$ so that, for any $A\in [\N]^k$ with $A\subseteq \M$, there are vectors $y_1^A, \ldots, y_k^A\in T^*$ with $\norm{y_i^A}\leqslant 1$ so that $y, y_1^A, \ldots, y_k^A$ form a finite block basis of the standard unit vector basis for $T^*$, $k\leqslant \min{\sf supp}(y^A_1)$ and 
$$
\Norm{f(A)-(y+y_1^A+\cdots+y_k^A)}<\eps.
$$ 
In particular, for all $A,B\in [\N]^k$, $A,B\subseteq \M$, we have that 
\maths{
\norm{f(A)-f(B)}
&< \Norm{y_1^A+\cdots+y_k^A}+\Norm{y_1^B+\cdots+y_k^B}+2\eps\\
&\leqslant 2 +2 +2\eps\\
&\leqslant \delta,
}
where the second bound follows from (2.13) in \cite{bls}.
On the other hand, for any two disjoint $A, B\in [\N]^k$, we have 
$$
\NORM{\sum_{n\in A}e_n-\sum_{n\in B}e_n}=\sqrt{2k}\geqslant \Delta,
$$ 
which implies that $\norm{f(A)-f(B)}\geqslant \delta$ and thus contradicts the preceding upper bound. 
\end{proof}

The following still unsolved problem provides the main theoretical motivation for our investigations here.
\begin{probl}
Suppose $X$ and $E$ are Banach spaces. Is it true that $X$ coarsely embeds into $E$ if and only if it uniformly embeds? \end{probl}

\begin{probl}
Suppose $X$ and $E$ are Banach spaces so that $\ku{CR}(X,E) >1$. Does it follow that $\ku{CR}(X,E)=\infty$?
\end{probl}


\end{document}